\documentclass[11pt]{article}

\usepackage[margin=1.0in]{geometry}
\usepackage{amsmath,amssymb,amsthm,mathtools}
\usepackage{enumitem}
\usepackage{microtype}
\usepackage[hidelinks]{hyperref}

\newtheorem{theorem}{Theorem}[section]
\newtheorem{question}[theorem]{Question}
\newtheorem{corollary}[theorem]{Corollary}
\newtheorem{lemma}[theorem]{Lemma}
\theoremstyle{remark}
\newtheorem{remark}[theorem]{Remark}

\title{Non-colliding billiards in the plane}
\author{Itai Benjamini \and Alexander Shamov \and Barak Weiss}
\date{May 27, 2026}

\begin{document}
\maketitle

\begin{abstract}
We present an open problem about non-colliding freely moving hard disks in the Euclidean plane, together with related positive and negative partial results. The open problem is stated in a non-degenerate form: velocities are required to be pairwise distinct and their speeds are required to be uniformly bounded away from infinity. The positive deterministic result gives a bounded, injective, non-colliding velocity assignment for the integer lattice; after a common velocity shift, the speeds are also bounded away from zero. The negative result shows that no bounded continuous vector field on the whole plane can serve as a universal assignment satisfying the same separation inequality for all pairs of points at distance greater than one. We also record a space-time interpretation of the problem, relate it to packings by nonparallel cylinders in three dimensions, and formulate a corresponding topological-dynamical question for cylinder packings.
\end{abstract}

\section{Introduction}

A set $S \subset \mathbb R^2$ is called \emph{uniformly discrete} if
\[
\inf_{\substack{x,y\in S\\ x\ne y}} \|x-y\| >0 .
\]
By scaling the plane, one may normalize this lower bound to be at least $1$.

Consider a uniformly discrete set of initial positions in the Euclidean plane. To each point $x$ assign a fixed velocity vector $v(x)$, and let all particles evolve simultaneously by
\[
x(t)=x+t v(x), \qquad t\in \mathbb R .
\]
We ask when the moving configuration can remain uniformly discrete for all positive and negative times. Equivalently, if one replaces the points by disks of radius $\rho>0$, we ask when the disks can move forever without intersecting.

The non-degenerate version considered here imposes two additional requirements: distinct particles should have distinct velocities, and all speeds should lie in a deterministic compact subinterval of $(0,\infty)$. The upper bound is a bounded-speed assumption. The lower bound is not forced by the collision condition, and it is not invariant under arbitrary Galilean shifts. Its purpose is to exclude static particles and to ask for a genuinely moving configuration. In deterministic examples one can often enforce such a lower bound by adding the same velocity vector to every particle, since only relative velocities affect collisions. In an isotropic random problem, however, adding a fixed nonzero vector would generally destroy rotation invariance, so the lower bound has to be part of the formulation. One may also ask weaker variants in which this lower bound is omitted.

The basic ergodic version already seems open. The isotropic version asks for the same properties with an additional rotation-invariance assumption.

\medskip
\noindent\textbf{Open problem (ergodic version).}
Does there exist a probability measure on marked point configurations $(S,v)$, where $S\subset\mathbb R^2$ is uniformly discrete and $v:S\to\mathbb R^2$ assigns a velocity to each point, with the following properties?
\begin{itemize}[leftmargin=2em]
\item \emph{Non-degenerate velocities:} there is  a deterministic constant $0<M<\infty$ such that, almost surely,
\[
\|v(x)\|\le M \qquad \text{for every }x\in S,
\]
and
\[
v(x)\ne v(y) \qquad \text{for every distinct }x,y\in S.
\]
\item \emph{Time invariance:} the law is invariant under the time evolution
\[
(S,v)\mapsto (S_t,v_t), \qquad
S_t:=\{x+t v(x):x\in S\},
\]
where the velocity attached to the moved point $x+t v(x)$ is still $v(x)$.
\item \emph{Translation invariance:} the law is invariant under translations of the plane, acting on the positions and leaving the velocity vectors unchanged.
\item \emph{Ergodicity:} the law is ergodic with respect to translations.
\item \emph{Non-emptiness:} $S$ is almost surely non-empty.
\item \emph{Hard-core condition at all times:} almost surely, for every $t\in\mathbb R$,
\[
\inf_{\substack{x,y\in S\\ x\ne y}}
\|(x+t v(x))-(y+t v(y))\|\ge 1 .
\]
\end{itemize}

\noindent\textbf{Isotropic version.}
Does such a measure exist if, in addition, its law is invariant under rotations of the plane, where rotations act simultaneously on positions and velocity vectors? 

The first item is part of the non-degenerate formulation, not an optional strengthening. Without some non-degeneracy condition, a stationary hard-core point process with all velocities equal to zero gives a trivial solution. Pairwise distinct velocities rule out this particular degeneracy, while the lower speed bound additionally rules out particles whose individual motion degenerates to rest.

It is useful to keep the following space-time picture in mind. Each particle determines a worldline
\[
L_x:=\{(x+t v(x),t):t\in\mathbb R\}\subset\mathbb R^2\times\mathbb R .
\]
The hard-core condition says that every horizontal time slice meets these worldlines in a point configuration with mutual distances at least $1$. Time invariance asks that the law of the sliced marked configuration be unchanged when all worldlines are translated in the time direction. This viewpoint also connects the problem to packings by infinite cylinders in three dimensions.

Let $e_1,e_2,e_3$ denote the standard basis of $\mathbb R^3$, let $B_\rho\subset \operatorname{span}(e_1,e_2)$ be the closed Euclidean disk of radius $\rho$, and let
\[
Z_\rho:=B_\rho+\mathbb R e_3
\]
be the closed vertical cylinder of radius $\rho$. A circular cylinder of radius $\rho$ is an isometric image of $Z_\rho$, and its direction is the image of the vertical direction.

\begin{lemma}[Worldlines give cylinder packings]\label{lem:worldline-cylinder}
Suppose that a moving configuration satisfies the hard-core condition above and that $\|v(x)\|\le M$ for every $x\in S$. Then any two distinct worldlines $L_x,L_y\subset\mathbb R^3$ have Euclidean distance at least
\[
\frac{1}{\sqrt{1+M^2}}.
\]
Consequently, the Euclidean cylinders of any radius
\[
0<\rho\le \frac{1}{2\sqrt{1+M^2}}
\]
about the worldlines have pairwise disjoint interiors. If the velocities are pairwise distinct, then these cylinders are pairwise nonparallel. If moreover $m\le \|v(x)\|\le M$, then their directions lie in the compact annulus of directions making angle between $\arctan m$ and $\arctan M$ with the $e_3$-axis.
\end{lemma}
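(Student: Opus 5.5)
The plan is to estimate the Euclidean distance between two points on different worldlines directly. Take points $P=(x+s v(x),s)\in L_x$ and $Q=(y+t v(y),t)\in L_y$, and set $\tau=t-s$. Write
\[
Q-P=\bigl((y+s v(y))-(x+s v(x)) + \tau v(y),\ \tau\bigr).
\]
The vector $w:=(y+s v(y))-(x+s v(x))$ is the separation of the two particles at the common time $s$. By the hard-core condition, $\|w\|\ge 1$. Hence
\[
\|Q-P\|^2=\|w+\tau v(y)\|^2+\tau^2 .
\]
By the triangle inequality, $\|w+\tau v(y)\|\ge \max(0,1-M|\tau|)$.

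It remains to minimize $f(\tau)=\max(0,1-M|\tau|)^2+\tau^2$ over $\tau\in\mathbb R$. For $|\tau|\ge 1/M$ we have $f\ge 1/M^2\ge 1/(1+M^2)$. For $|\tau|\le 1/M$, the quadratic $(1-Mu)^2+u^2$ in $u=|\tau|$ is minimized at $u=M/(1+M^2)$, with value $1/(1+M^2)$. Equivalently, this is the squared distance from the origin to the line $Mu+r=1$ in the $(u,r)$-plane. Thus $\|Q-P\|\ge 1/\sqrt{1+M^2}$. When $M=0$ the bound is simply $1$, which is immediate. Since $P$ and $Q$ were arbitrary, $\operatorname{dist}(L_x,L_y)\ge 1/\sqrt{1+M^2}$.

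For the cylinders, let $C_x$ be the closed $\rho$-neighbourhood of $L_x$, which is a circular cylinder of radius $\rho$ with axis $L_x$. If $C_x$ and $C_y$ had a common interior point $z$, then $\operatorname{dist}(z,L_x)<\rho$ and $\operatorname{dist}(z,L_y)<\rho$. This gives $\operatorname{dist}(L_x,L_y)<2\rho\le 1/\sqrt{1+M^2}$, which is a contradiction.

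The direction of $L_x$ is the line spanned by $(v(x),1)$. Two such vectors $(v(x),1)$ and $(v(y),1)$ are parallel only if they are equal, since their last coordinates agree; that is, only if $v(x)=v(y)$. So distinct velocities give nonparallel cylinders.

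Finally, the angle $\theta$ between $(v,1)$ and $e_3$ satisfies $\tan\theta=\|v\|$. Since $\arctan$ is increasing, the condition $m\le\|v\|\le M$ gives $\theta\in[\arctan m,\arctan M]$. This is a compact set of directions.

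The only step needing care is the minimization. The key point is that one must not simply use the same-time separation $\|w\|\ge1$. Instead, one has to allow the time offset $\tau$ and balance the horizontal drift $M|\tau|$ against the vertical displacement $|\tau|$.
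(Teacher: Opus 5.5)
Your proof is correct and follows essentially the same route as the paper: you bound the horizontal component below by $1-M|\tau|$ using the same-time separation and the triangle inequality, split at $|\tau|=1/M$, and minimize the quadratic at $u=M/(1+M^2)$; you then use the direction vector $(v(x),1)$ for nonparallelism and $\tan\theta=\|v\|$ for the angle bound. Your explicit check that a common interior point would force $\operatorname{dist}(L_x,L_y)<2\rho$ fills in the step the paper calls immediate.
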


\begin{proof}
If $M=0$, then all worldlines are vertical and the assertion follows directly from the hard-core condition. Hence assume $M>0$.

Fix $x\ne y$ and points $(x+t v(x),t)\in L_x$ and $(y+s v(y),s)\in L_y$. Put $\tau=s-t$ and
\[
h=(x+t v(x))-(y+t v(y)).
\]
By the hard-core condition, $\|h\|\ge 1$. The difference of the two space-time points is
\[
\bigl(h-\tau v(y),-\tau\bigr).
\]
If $|\tau|\ge 1/M$, then its Euclidean norm is at least $1/M$, which is at least $1/\sqrt{1+M^2}$. If $|\tau|\le 1/M$, then
\[
\bigl\|\bigl(h-\tau v(y),-\tau\bigr)\bigr\|^2
\ge (1-M|\tau|)^2+|\tau|^2
\ge \frac{1}{1+M^2},
\]
since the minimum of $(1-Mu)^2+u^2$ for $u\ge 0$ occurs at $u=M/(1+M^2)$ and equals $1/(1+M^2)$. This proves the distance bound.

The cylinder conclusion follows immediately. Finally, the direction of $L_x$ is the line spanned by $(v(x),1)$. If two such directions agree, then $(v(x),1)=\lambda(v(y),1)$ for some nonzero scalar $\lambda$; comparing the third coordinate gives $\lambda=1$, and hence $v(x)=v(y)$. Thus pairwise distinct velocities give nonparallel cylinders. The angle $\theta$ between $(v(x),1)$ and the $e_3$-axis satisfies $\tan\theta=\|v(x)\|$, giving the stated annulus of directions.
\end{proof}

The cylinder-packing viewpoint places the problem near a classical line of work. Bezdek and W. Kuperberg determined the maximal density of a packing by congruent infinite circular cylinders in $\mathbb R^3$; the value is $\pi/\sqrt{12}$, as in the planar disk-packing problem, and it is achieved by parallel cylinders whose intersections with a perpendicular plane form an optimal disk packing \cite{BezdekKuperberg}. K. Kuperberg constructed nonparallel cylinder packings of positive density \cite{KuperbergNonparallel}. Further developments include dense nonparallel packings of Ismailescu and Laskawiec \cite{IsmailescuLaskawiec} and recent work of Eliyahu on lower and upper densities for nonparallel cylinder packings \cite{Eliyahu}.

This suggests the following topological-dynamical analogue of the
ergodic problem above. Let $\mathcal{CL}(\mathbb R^3)$ be the space of
closed subsets of $\mathbb R^3$ with the Chabauty-Fell topology. Since
$\mathbb R^3$ is locally compact and second countable, this space is
compact and metrizable. For background on the Fell topology and
related topologies, see Fell's original paper \cite{Fell} and Beer's
monograph \cite{Beer}, and see \cite[Appendix]{SolomonSmilansky} for an explicit
and convenient metric inducing the topology. Let $\mathcal P_\rho$ be the collection of closed subsets of $\mathbb R^3$ that are unions of closed circular cylinders of radius $\rho$ with pairwise disjoint interiors, and let
\[
\mathcal{CYL}_\rho:=\overline{\mathcal P_\rho}^{\,\mathcal{CL}(\mathbb R^3)}.
\]
Then $\mathcal{CYL}_\rho$ is a compact invariant space for the natural action of the Euclidean group, and also for the subgroup of translations.

\medskip
\noindent\textbf{Open problem (topological cylinder version).}
For the actions of the translation group and of the full Euclidean isometry group on $\mathcal{CYL}_\rho$:
\begin{itemize}[leftmargin=2em]
\item classify, or describe, the minimal closed invariant subsets;
\item decide whether there is a nontrivial minimal subsystem all of whose nonempty elements are packings by cylinders with no two cylinder axes parallel.
\end{itemize}
The empty configuration is a fixed point, hence gives a trivial minimal subsystem. The question is whether minimality can coexist with a genuinely nonparallel cylinder-packing structure. Known nonparallel packing constructions show that positive density and nonparallelity are compatible, but orbit closures may lose the features one wants to preserve, for instance by acquiring degenerate limits such as the empty configuration or packings containing two cylinders with parallel axes.

For background on uniformly discrete sets in the plane and related dynamics, see, for example, Solomon and Weiss \cite{SolomonWeiss}. A well-known problem about distances between moving points is the lonely runner problem; see Tao \cite{TaoLonelyRunner}.

\section{A construction with the integer lattice as initial positions}

Throughout the paper, $\mathbb R^2$ is equipped with its standard inner product $\langle\cdot,\cdot\rangle$ and Euclidean norm $\|\cdot\|$. Let
\[
I=\begin{pmatrix}0&-1\\ 1&0\end{pmatrix}
\]
denote rotation by $\pi/2$.

Suppose that two particles start at $x,y\in\mathbb R^2$ and move with different velocities $v(x),v(y)\in\mathbb R^2$. Then
\begin{align*}
\inf_{t\in\mathbb R}\|(x+t v(x))-(y+t v(y))\|
&=\inf_{t\in\mathbb R}\|(x-y)+t(v(x)-v(y))\| \\
&=\frac{|\langle x-y, I(v(x)-v(y))\rangle|}{\|v(x)-v(y)\|} .
\end{align*}
This is the distance from the point $x-y$ to the line spanned by $v(x)-v(y)$.

Set
\[
w(x):=I v(x).
\]
Since $I$ is an isometry, the preceding formula shows that a uniform lower bound $r>0$ on all two-particle closest-approach distances is equivalent to
\[
\bigl|\langle x-y,w(x)-w(y)\rangle\bigr|\ge r\|w(x)-w(y)\|
\]
for all distinct initial positions $x,y$ under consideration. Equivalently, the closest-approach distance for the pair is the quotient
\[
\frac{\bigl|\langle x-y,w(x)-w(y)\rangle\bigr|}{\|w(x)-w(y)\|}.
\]
Thus, to prove a positive uniform lower bound, it is enough to prove the slightly stronger strict inequality
\begin{equation}\label{eq:separation-condition}
\bigl|\langle x-y,w(x)-w(y)\rangle\bigr|>c\|w(x)-w(y)\|
\end{equation}
for some constant $c>0$. Conversely, \eqref{eq:separation-condition} implies that the closest-approach distance for every pair is greater than $c$. The injectivity of $w$ is equivalent to pairwise distinct velocities, and boundedness of $w$ is equivalent to boundedness of the velocities.

\begin{question}\label{q:discrete}
Let $S\subset\mathbb R^2$ be uniformly discrete. Does there exist a bounded injective map $w:S\to\mathbb R^2$ and a constant $c>0$ such that
\[
\bigl|\langle x-y,w(x)-w(y)\rangle\bigr|>c\|w(x)-w(y)\|
\]
for every pair of distinct points $x,y\in S$?
\end{question}

For $S=\mathbb Z^2$ the answer is positive.

\begin{theorem}\label{thm:lattice}
Let $\phi:\mathbb Z\to\mathbb R$ be bounded and strictly increasing. Define
\[
w:\mathbb Z^2\to\mathbb R^2, \qquad
w(x_1,x_2)=(\phi(x_1),\phi(x_2)).
\]
Then $w$ is bounded and injective, and it satisfies \eqref{eq:separation-condition} with every constant $c\in(0,1)$.
\end{theorem}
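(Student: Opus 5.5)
Boundedness and injectivity are immediate. Boundedness follows from boundedness of $\phi$. Injectivity holds because a strictly increasing $\phi$ is injective, so $w(x)=w(y)$ forces $x_1=y_1$ and $x_2=y_2$.

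The substance is the separation inequality. Fix distinct $x,y\in\mathbb Z^2$ and write $a_i:=x_i-y_i\in\mathbb Z$ and $d_i:=\phi(x_i)-\phi(y_i)$ for $i=1,2$. Then
\[
\langle x-y,w(x)-w(y)\rangle=a_1d_1+a_2d_2 .
\]
Strict monotonicity of $\phi$ gives two facts:
\begin{itemize}[leftmargin=2em]
\item $d_i$ has the same sign as $a_i$, with $d_i=0$ exactly when $a_i=0$;
\item $a_id_i=|a_i|\,|d_i|\ge 0$.
\end{itemize}
Hence the two terms never cancel, and
\[
|\langle x-y,w(x)-w(y)\rangle| = |a_1|\,|d_1|+|a_2|\,|d_2| .
\]

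Next I use integrality. Whenever $a_i\ne 0$ we have $|a_i|\ge 1$, and whenever $a_i=0$ we have $d_i=0$. In both cases $|a_i|\,|d_i|\ge |d_i|$. Therefore
\[
|\langle x-y,w(x)-w(y)\rangle|\ge |d_1|+|d_2|\ge \sqrt{d_1^2+d_2^2}=\|w(x)-w(y)\|,
\]
where the middle step is the inequality $\ell^1\ge\ell^2$.

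Since $x\ne y$, injectivity gives $\|w(x)-w(y)\|>0$. So for every $c\in(0,1)$,
\[
\|w(x)-w(y)\|>c\,\|w(x)-w(y)\|,
\]
which is \eqref{eq:separation-condition}.

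The only point needing care is the sign-coherence step, namely that $a_i$ and $d_i$ share a sign coordinatewise. This is exactly where the product structure of $w$ and the monotonicity of $\phi$ enter, and everything after it is elementary. The restriction $c<1$ is needed only to obtain strict inequality, since equality $|\langle x-y,w(x)-w(y)\rangle|=\|w(x)-w(y)\|$ can occur, for example when $x-y=(1,0)$.
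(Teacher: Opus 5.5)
Your proof is correct and follows essentially the same route as the paper. You use monotonicity for coordinatewise sign coherence, integrality for $|x_i-y_i|\,|\phi(x_i)-\phi(y_i)|\ge|\phi(x_i)-\phi(y_i)|$, then the $\ell^1\ge\ell^2$ bound, and injectivity for the strict inequality; your example $x-y=(1,0)$ correctly shows why $c<1$ is needed.
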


\begin{proof}
Boundedness of $w$ follows from boundedness of $\phi$. Since $\phi$ is strictly increasing, it is injective; hence $w$ is injective.

Let $x=(x_1,x_2)$ and $y=(y_1,y_2)$ be distinct points of $\mathbb Z^2$. For each coordinate $i\in\{1,2\}$, either $x_i=y_i$, or else $|x_i-y_i|\ge 1$. Since $\phi$ is increasing, $x_i-y_i$ and $\phi(x_i)-\phi(y_i)$ have the same sign. Therefore
\[
(x_i-y_i)(\phi(x_i)-\phi(y_i))
=|x_i-y_i|\,|\phi(x_i)-\phi(y_i)|
\ge |\phi(x_i)-\phi(y_i)|.
\]
Summing over $i=1,2$ gives
\begin{align*}
\langle x-y,w(x)-w(y)\rangle
&=\sum_{i=1}^2 (x_i-y_i)(\phi(x_i)-\phi(y_i)) \\
&\ge \sum_{i=1}^2 |\phi(x_i)-\phi(y_i)| \\
&\ge \left(\sum_{i=1}^2 |\phi(x_i)-\phi(y_i)|^2\right)^{1/2} \\
&=\|w(x)-w(y)\|.
\end{align*}
Because $w$ is injective, $\|w(x)-w(y)\|>0$ whenever $x\ne y$. Hence, for every $c\in(0,1)$,
\[
\bigl|\langle x-y,w(x)-w(y)\rangle\bigr|
\ge \|w(x)-w(y)\|
> c\|w(x)-w(y)\|,
\]
which is \eqref{eq:separation-condition}.
\end{proof}

\begin{remark}
The monotone-coordinate idea in Theorem \ref{thm:lattice} is closely related in spirit to constructions used in the study of nonparallel cylinder packings, especially the construction of K. Kuperberg \cite{KuperbergNonparallel}. The statement here is included because it gives a direct and elementary collision-avoidance construction for the lattice $\mathbb Z^2$.
\end{remark}

\begin{corollary}\label{cor:lattice-billiards}
For the initial configuration $\mathbb Z^2$, there are bounded, pairwise different velocities for which all closest-approach distances are bounded below by a positive constant. Consequently, disks of sufficiently small radius centered initially at the points of $\mathbb Z^2$ can move for all $t\in\mathbb R$ without colliding. The velocities may also be chosen so that their speeds are uniformly bounded away from zero.
\end{corollary}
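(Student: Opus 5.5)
The plan is to instantiate Theorem \ref{thm:lattice} with a concrete bounded strictly increasing function, say $\phi(n)=\arctan n$ (or $\phi(n)=n/(1+|n|)$). This gives a bounded injective $w:\mathbb Z^2\to\mathbb R^2$ satisfying \eqref{eq:separation-condition}, for instance with $c=1/2$. We then pass back to velocities by $v(x):=-I\,w(x)$, using $I^{-1}=-I$ so that $w=Iv$. Because $I$ is an isometry, $v$ is bounded (by $\sqrt2\,\pi/2$ for the arctangent choice) and injective, so the velocities are pairwise different.

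Next comes the closest-approach bound. By the computation preceding Question \ref{q:discrete}, for distinct $x,y$ the infimum over $t\in\mathbb R$ of the distance between the two particles equals
\[
\frac{|\langle x-y,w(x)-w(y)\rangle|}{\|w(x)-w(y)\|}.
\]
This formula is valid because $v(x)\ne v(y)$, and by \eqref{eq:separation-condition} the quotient exceeds $c$. In fact the proof of Theorem \ref{thm:lattice} shows it is at least $1$. So every closest-approach distance is at least $1$, uniformly in the pair. Disks of radius $\rho<1/2$, and also $\rho=1/2$ if closed-interior tangency is allowed, then never overlap for any $t\in\mathbb R$: two disks of radius $\rho$ intersect only if their centers are within $2\rho$ of each other.

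For the lower speed bound I add a common vector $u$ to every velocity, $v'(x)=v(x)+u$. Relative velocities are unchanged, so the collision analysis and injectivity are untouched, and boundedness is preserved. Let $B=\sup_x\|v(x)\|<\infty$ and pick $u$ with $\|u\|=B+1$. Then $\|v'(x)\|\ge \|u\|-\|v(x)\|\ge 1$ and $\|v'(x)\|\le 2B+1$, so all speeds lie in the compact interval $[1,2B+1]\subset(0,\infty)$.

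The argument is essentially bookkeeping, and I expect no serious obstacle. The only points needing care are the sign conventions in inverting $w=Iv$, and confirming that the closest-approach formula, which uses $v(x)-v(y)\ne0$, is applicable. Injectivity from Theorem \ref{thm:lattice} guarantees the latter.
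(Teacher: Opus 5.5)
Your proposal is correct and follows the paper's proof step for step. You instantiate Theorem~\ref{thm:lattice} with $\phi(n)=\arctan n$, set $v=-Iw$, and use the closest-approach formula to get a uniform lower bound (which you sharpen to $1$ via the proof of the theorem); you then take the disk radius below half that bound, and add a common vector of norm exceeding $\sup_x\|v(x)\|$ to keep speeds away from zero, handling $I^{-1}=-I$ and the requirement $v(x)\ne v(y)$ correctly along the way.
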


\begin{proof}
Choose $\phi$ as in Theorem \ref{thm:lattice}, for instance $\phi(n)=\arctan n$, and set $v=-I w$. Theorem \ref{thm:lattice} gives a positive lower bound on all closest-approach distances. Taking the disk radius smaller than half this lower bound prevents collisions. The velocities are bounded and pairwise different because $w$ is bounded and injective.

Finally, replacing every velocity $v(x)$ by $v(x)+a$, where $a\in\mathbb R^2$ is fixed, does not change any relative velocity and hence does not change any closest-approach distance. Choosing $\|a\|>\sup_x\|v(x)\|$ makes all speeds bounded below by $\|a\|-\sup_x\|v(x)\|>0$, while they remain bounded above by $\|a\|+\sup_x\|v(x)\|$.
\end{proof}

\section{No universal continuous vector field}

The construction above uses the product structure of $\mathbb Z^2$ and does not appear to extend directly to arbitrary uniformly discrete sets. The following theorem shows that this limitation is unavoidable in one natural sense: there is no bounded continuous field on the whole plane that satisfies the separation inequality simultaneously for all pairs of points at distance greater than one. The theorem is not an obstruction to the random problem stated above, which concerns a discrete random set of initial positions rather than a continuous assignment on all of $\mathbb R^2$; its role is to rule out one possible universal construction.

\begin{theorem}\label{thm:no-universal}
There is no bounded continuous function $w:\mathbb R^2\to\mathbb R^2$ for which there exists a constant $c>0$ such that
\begin{equation}\label{eq:universal-abs}
\bigl|\langle x-y,w(x)-w(y)\rangle\bigr|>c\|w(x)-w(y)\|
\end{equation}
for every pair $x,y\in\mathbb R^2$ with $\|x-y\|>1$.
\end{theorem}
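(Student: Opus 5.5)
The plan is to turn the inequality \eqref{eq:universal-abs} into a topological statement about $w$ and then use boundedness. First I will remove the absolute value. The set $U=\{(x,y)\in\mathbb R^2\times\mathbb R^2:\|x-y\|>1\}$ is connected. The function $F(x,y)=\langle x-y,w(x)-w(y)\rangle$ is continuous on $U$. By \eqref{eq:universal-abs} it never vanishes there: if $F(x,y)=0$ we would get $0>c\|w(x)-w(y)\|\ge 0$. Hence $F$ has constant sign on $U$. Since $F(x,y)=F(y,x)$, replacing $w$ by $-w$ if necessary, I may assume
\[
\langle x-y,w(x)-w(y)\rangle>c\|w(x)-w(y)\|>0\qquad(\|x-y\|>1).
\]
In particular $w(x)\ne w(y)$ whenever $\|x-y\|>1$, and the angle between $w(y)-w(x)$ and $y-x$ is strictly less than $\pi/2$.

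Second, I will extract winding numbers. For a centre $x$ and radius $R>1$, the loop $\theta\mapsto w(x+Re(\theta))-w(x)$ avoids $0$. It makes an acute angle with $e(\theta)=(\cos\theta,\sin\theta)$ throughout, so it is homotopic to $e(\theta)$ in $\mathbb R^2\setminus\{0\}$ and has winding number $1$. By the same reasoning, let $\gamma_R(\theta)=w(Re(\theta))$. Then for $\|x\|<R-1$ the loop $\gamma_R-w(x)$ winds once, because it makes an acute angle with $Re(\theta)-x$, which winds once. For $\|x\|>R+1$ it winds zero times. Writing $W_R$ for the set of points of winding number $1$ with respect to $\gamma_R$, I get
\[
w(D_{R-1})\subset W_R, \qquad w(\mathbb R^2\setminus D_{R+1})\cap W_R=\emptyset,
\]
where $D_r$ is the disk of radius $r$ about the origin. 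By degree theory $W_R\subset w(D_R)$. So the bounded sets $W_R$ are nested: $W_R\subset W_{R+3}$. Moreover, the image of every annulus $D_{R+3}\setminus D_{R+1}$ lies in $W_{R+3}\setminus W_R$, and such annuli are pairwise disjoint for $R$ spaced by $4$.

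Third, I will use boundedness of $w$ to force a contradiction, and this is the step I expect to be the main obstacle. Continuity and the qualitative conclusions above are not enough on their own. The lattice example of Theorem \ref{thm:lattice} with $\phi=\arctan$ shows that $w$ may flatten out at infinity, so mere area or nesting considerations fail. The quantitative constant $c$ must therefore enter.

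The first tool I would try is to exploit the angle bound at bounded distances. For $1<\|x-y\|\le 2$ the angle between $x-y$ and $w(x)-w(y)$ is at most $\arccos(c/2)<\pi/2$, uniformly in $x$ and $y$. I would then run a discrete summation-by-parts argument along a large circle $p_k=Re(2\pi k/N)$ with chord length slightly above $1$ (or chords of several lengths simultaneously). This should show that $\sum_k\|w(p_{k+1})-w(p_k)\|$ is controlled. I would combine that with the fact that the loop $\gamma_R$ must wind around $w(x)$ for all $\|x\|<R-1$ yet avoid $w(x)$ for $\|x\|>R+1$. The hope is to show that the $\gamma_R$ for consecutive $R$ must be separated by a definite amount in terms of $c$. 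For instance, I would aim to show that $\operatorname{dist}(w(x),\gamma_R)\gtrsim c$ for $\|x-Re(\theta)\|\ge 2$, using the condition with the pair $(x,Re(\theta))$ together with pairs to nearby points of the circle. Iterating over $R,R+4,R+8,\dots$ would then produce infinitely many disjoint annular regions of definite width inside a bounded set, contradicting boundedness.

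If this direct route fails, my fallback is to test \eqref{eq:universal-abs} against a short explicit configuration of three or four points near a single large circle. The aim there is to show that the acute-angle conditions for all pairs are incompatible once $\|w\|$ is bounded and the points are spread at scale much larger than $\sup\|w\|/c$.
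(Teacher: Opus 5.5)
Your first step, removing the absolute value by connectedness of $\{\|x-y\|>1\}$, is exactly the paper's. Your winding-number facts are essentially right, with one slip: points with $\|x\|=R+3$ map onto $\gamma_{R+3}$, so only $\{R+1<\|x\|<R+2\}$ is guaranteed to map into $W_{R+3}\setminus W_R$. However, these facts use only acuteness. They hold verbatim for the bounded map $w(x)=x/(1+\|x\|)$, the gradient of the strictly convex function $\|x\|-\log(1+\|x\|)$, so they carry no information about $c$, as you suspect.

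The whole contradiction lives in your third step, which is not supplied, and the routes you sketch cannot work. The hypothesis is invariant under $w\mapsto\lambda w+b$ with $\lambda>0$, and it is satisfiable on every bounded set. Indeed, one may assume $c\le 1$, since for $c>1$ any pair with $1<\|x-y\|\le c$ already violates it by Cauchy--Schwarz. Then on any bounded set the map $w(x)=\varepsilon x$ satisfies it, because $\varepsilon\|x-y\|^2>c\,\varepsilon\|x-y\|$ when $\|x-y\|>1$. This has three consequences:
\begin{enumerate}[leftmargin=2em]
\item No argument using pairs near a single circle can give $\operatorname{dist}(w(x),\gamma_R)\gtrsim c$; for $\varepsilon x$ this distance is $\varepsilon$ times the distance from $x$ to the circle.
\item No uniform ``definite width'' of the regions $W_{R+4}\setminus W_R$ can be obtained this way.
\item A configuration of three or four points can never be contradictory, at any scale.
\end{enumerate}

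What is missing is a scale-free way for $c$ to act at infinity, and the paper supplies it in two moves.

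First, set $a=\min\{c/2,1/2\}$ and $\delta=\arcsin a$. Your observation for $1<\|x-y\|\le 2$ says that $w(x)-w(y)$ lies in the closed convex cone $C_{x-y}$ of half-angle $\pi/2-\delta$ about $x-y$. Cutting the segment from $x$ to $y$ into $n$ equal pieces of length in $(1,2]$ and summing the increments extends this, with the same $\delta$, to all $\|x-y\|>1$. The raw inequality alone only gives the angle bound $\arccos(c/\|x-y\|)$, which tends to $\pi/2$.

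Second, the cluster set $V_\infty$ of $w$ at infinity is compact and connected. It is not a single point: otherwise, letting $x=ru$ with $r\to\infty$ in the cone condition, for every direction $u$, forces $w$ to be constant. Hence $V_\infty$ is infinite. Suppose distinct $\xi,\eta\in V_\infty$ are approached along directions $\alpha,\beta$. Pair $x_n$ with a much farther $y_{N(n)}$, and $y_n$ with a much farther $x_{N(n)}$. This produces differences whose directions tend to $\beta$ and $\alpha$, with $w$-increments tending to $\eta-\xi$ and $\xi-\eta$. The cone condition then forces $d_{\mathbb T}(\alpha,\beta)\ge 2\delta$. Taking more than $\pi/\delta$ distinct points of $V_\infty$ then contradicts pigeonhole on the circle.

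Neither the extension of the cone condition to all distances nor any analysis of the behaviour of $w$ at infinity appears in your plan, and together they are the substance of the proof.
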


\begin{proof}
Assume, for a contradiction, that such a function $w$ exists.

Let
\[
D:=\{(x,y)\in\mathbb R^2\times\mathbb R^2:\|x-y\|>1\}.
\]
This set is connected. Indeed, under the change of variables $(x,y)\mapsto ((x+y)/2,x-y)$, it is the product of $\mathbb R^2$ with the connected set $\{z\in\mathbb R^2:\|z\|>1\}$.

The inequality \eqref{eq:universal-abs} implies that
\[
\langle x-y,w(x)-w(y)\rangle\ne 0
\]
for every $(x,y)\in D$. Since this inner product is continuous on the connected set $D$, its sign is constant. Replacing $w$ by $-w$ if necessary, we may assume that
\begin{equation}\label{eq:universal-positive}
\langle x-y,w(x)-w(y)\rangle>c\|w(x)-w(y)\|
\end{equation}
for all $(x,y)\in D$.

Set
\[
a:=\min\{c/2,1/2\}\in(0,1),
\qquad
\delta:=\arcsin a>0.
\]
For every nonzero vector $u\in\mathbb R^2$, define the closed cone
\[
C_u:=\{z\in\mathbb R^2:\langle u,z\rangle\ge a\|u\|\,\|z\|\}.
\]
Equivalently, $C_u$ consists of all vectors whose angle with $u$ is at most
\[
\arccos a=\frac{\pi}{2}-\delta .
\]
In particular, each $C_u$ is a closed convex cone, and $C_{\lambda u}=C_u$ for every $\lambda>0$.

We first record a consequence of \eqref{eq:universal-positive}. If $1<\|x-y\|\le 2$, then
\[
c\ge a\|x-y\|,
\]
and therefore \eqref{eq:universal-positive} implies
\begin{equation}\label{eq:cone-near}
w(x)-w(y)\in C_{x-y}.
\end{equation}
We claim that the same conclusion holds for all $x,y$ with $\|x-y\|>1$:
\begin{equation}\label{eq:cone-all}
w(x)-w(y)\in C_{x-y}.
\end{equation}
Let $L=\|x-y\|>1$. Choose an integer $n$ such that
\[
L/2\le n<L;
\]
such an integer exists, for example $n=\lceil L/2\rceil$. Then $L/n\in(1,2]$. Put
\[
z_i=x+\frac{i}{n}(y-x), \qquad i=0,1,\ldots,n.
\]
For each $i$, the segment length $\|z_i-z_{i+1}\|=L/n$ lies in $(1,2]$, and $z_i-z_{i+1}$ is a positive scalar multiple of $x-y$. Hence \eqref{eq:cone-near} gives
\[
w(z_i)-w(z_{i+1})\in C_{x-y}.
\]
Since $C_{x-y}$ is a convex cone,
\[
w(x)-w(y)=\sum_{i=0}^{n-1}\bigl(w(z_i)-w(z_{i+1})\bigr)
\in C_{x-y},
\]
proving \eqref{eq:cone-all}.

Now consider the cluster set of $w$ at infinity,
\[
V_\infty:=\bigcap_{R>0}
\overline{\,w\bigl(\{x\in\mathbb R^2:\|x\|\ge R\}\bigr)\,}.
\]
Here and below, the bar denotes closure. Because $w$ is bounded, each set in the intersection is compact. It is also connected: the exterior $\{x:\|x\|\ge R\}$ is connected, its image under the continuous map $w$ is connected, and the closure of a connected set is connected. These compact connected sets are nested as $R$ increases. Therefore $V_\infty$ is nonempty, compact, and connected.

We shall use the following elementary characterization: a point $\xi\in\mathbb R^2$ lies in $V_\infty$ if and only if there is a sequence $x_n\in\mathbb R^2$ such that $\|x_n\|\to\infty$ and $w(x_n)\to\xi$. Indeed, one implication is immediate from the definition. Conversely, if $\xi\in V_\infty$, then for each $n$ we may choose $x_n$ with $\|x_n\|\ge n$ and $\|w(x_n)-\xi\|<1/n$.

The set $V_\infty$ is not a singleton. Suppose instead that $V_\infty=\{\xi\}$. Then $w(x)\to\xi$ as $\|x\|\to\infty$; otherwise, boundedness of $w$ would produce a sequence tending to infinity along which $w$ has a subsequential limit different from $\xi$, contradicting the definition of $V_\infty$.

Fix $y\in\mathbb R^2$ and a unit vector $u\in S^1$. Applying \eqref{eq:cone-all} to $x=ru$ and this fixed $y$, and then letting $r\to\infty$, gives
\[
\xi-w(y)\in C_u.
\]
Since this holds for every $u\in S^1$, it follows that $\xi-w(y)=0$; for example, the conditions for $u$ and $-u$ can both hold only for the zero vector. Thus $w(y)=\xi$ for every $y$, so $w$ is constant. This contradicts the strict inequality \eqref{eq:universal-abs}. Hence $V_\infty$ is a nontrivial connected compact set. In particular, it contains arbitrarily many distinct points.

It remains to obtain a contradiction from the angular restrictions imposed by \eqref{eq:cone-all}. Let $\mathbb T=\mathbb R/2\pi\mathbb Z$, and let $d_{\mathbb T}$ denote circular distance on $\mathbb T$. For a nonzero vector $z$, write $\arg z\in\mathbb T$ for its direction.

We claim that if $\xi,\eta\in V_\infty$ are distinct, and if there are sequences $x_n,y_n$ with
\[
\|x_n\|\to\infty,
\qquad
\|y_n\|\to\infty,
\qquad
w(x_n)\to\xi,
\qquad
w(y_n)\to\eta,
\]
such that
\[
\arg x_n\to\alpha,
\qquad
\arg y_n\to\beta
\]
for some $\alpha,\beta\in\mathbb T$, then
\begin{equation}\label{eq:angular-separation}
d_{\mathbb T}(\alpha,\beta)\ge 2\delta .
\end{equation}

To prove the claim, choose indices $N(n)$ tending to infinity so fast that
\[
\frac{\min\{\|x_{N(n)}\|,\|y_{N(n)}\|\}}
{\max\{\|x_n\|,\|y_n\|\}}
\longrightarrow \infty .
\]
Then
\[
\arg (y_{N(n)}-x_n)\to\beta,
\qquad
\arg (x_{N(n)}-y_n)\to\alpha .
\]
For all large $n$, the relevant distances are greater than $1$, so \eqref{eq:cone-all} gives
\[
w(y_{N(n)})-w(x_n)\in C_{y_{N(n)}-x_n},
\qquad
w(x_{N(n)})-w(y_n)\in C_{x_{N(n)}-y_n}.
\]
Let $q:=\eta-\xi\ne 0$. Since
\[
w(y_{N(n)})-w(x_n)\to q,
\qquad
w(x_{N(n)})-w(y_n)\to -q,
\]
the preceding cone inclusions imply
\[
d_{\mathbb T}(\arg (y_{N(n)}-x_n),\arg q)
\le \frac{\pi}{2}-\delta+o(1),
\]
and
\[
d_{\mathbb T}(\arg (x_{N(n)}-y_n),\arg(-q))
\le \frac{\pi}{2}-\delta+o(1).
\]
Because $d_{\mathbb T}(\arg q,\arg(-q))=\pi$, the triangle inequality on the circle yields
\[
d_{\mathbb T}(\arg (y_{N(n)}-x_n),\arg (x_{N(n)}-y_n))
\ge 2\delta-o(1).
\]
Letting $n\to\infty$ proves \eqref{eq:angular-separation}.

Now choose an integer $k>\pi/\delta$ and choose distinct points
\[
\xi_1,\ldots,\xi_k\in V_\infty .
\]
For each $i$, by the sequence characterization of $V_\infty$ and compactness of the circle, choose a sequence $x_n^{(i)}$ such that
\[
\|x_n^{(i)}\|\to\infty,
\qquad
w(x_n^{(i)})\to\xi_i,
\qquad
\arg x_n^{(i)}\to\alpha_i\in\mathbb T .
\]
The claim gives
\[
d_{\mathbb T}(\alpha_i,\alpha_j)\ge 2\delta
\]
for all $i\ne j$. But the circle of circumference $2\pi$ cannot contain more than $\pi/\delta$ points whose pairwise circular distances are all at least $2\delta$: the open arcs of radius $\delta$ around such points are pairwise disjoint. This contradicts $k>\pi/\delta$.

The contradiction proves the theorem.
\end{proof}

\begin{remark}
One can ask analogous questions in other spaces. For example, in the hyperbolic plane $\mathbb H^2$, particles could move along geodesics with prescribed tangent vectors. The construction in Theorem \ref{thm:lattice} uses the affine product structure of $\mathbb Z^2\subset\mathbb R^2$ and does not transfer directly to $\mathbb H^2$. It would be interesting to know whether there is an isometry-invariant analogue of the open problem, or an analogue of Theorem \ref{thm:lattice}, in the hyperbolic plane.
\end{remark}

\end{document}